\newcommand{\bsx}{\boldsymbol{x}}
\newcommand{\rd}{\,\mathrm{d}}
\newcommand{\cf}{\mathcal{F}}
\newcommand{\e}{\mathbb{E}}
\newtheorem{theorem}{Theorem}
\title{A constraint on extensible quadrature rules}
\author{Art B. Owen\\Stanford University}
\date{January 2015}
\begin{document}
\maketitle

\begin{abstract}
When the worst case integration error in a family
of functions decays as $n^{-\alpha}$ for some $\alpha>1$
and simple averages along an extensible sequence match that
rate at a set of sample sizes $n_1<n_2<\dots<\infty$,
then these sample sizes must grow at least
geometrically. More precisely, $n_{k+1}/n_k\ge \rho$
must hold for a value $1<\rho<2$ that increases with $\alpha$.
This result always rules out arithmetic sequences but never rules
out sample size doubling. The same constraint holds in
a root mean square setting.
\end{abstract}

\section{Introduction}

For both Monte Carlo (MC) 
and quasi-Monte Carlo (QMC) sampling, 
\cite{sobo:1998} recommends that the number $n$ of
sample points should be increased geometrically,
not arithmetically.  Specifically, he recommends
using a sequence like $n_1$, $2n_1$, $4n_1$, etc., instead
of $n_1$, $2n_1$, $3n_1$ and so on.
For either MC or QMC, the estimate of an integral is a
simple unweighted average of the integrand at $n$
points.

In the case of Monte Carlo sampling with its
slow $n^{-1/2}$ convergence rate, if $n$ is too small
to get a good answer then taking $n+k$
sample points for $k\ll n$ is unlikely
to bring a meaningful improvement in accuracy.
\cite{sobo:1993:b} studies the correlations among Monte Carlo
estimates along sample sizes $n_k = 2^{k-1}n_1$.

In quasi-Monte Carlo sampling, much better convergence
rates are sometimes possible, depending particularly
on the smoothness and dimension of the problem space.
\cite{nova:wozn:2010} provide a comprehensive treatise
on error rates for numerical integration.
In favorable cases, a small change in $n$
might make a meaningful reduction in the error bound.
What we show here is that rate-optimal sample
sizes are widely spaced in those favorable cases.

\cite{sobo:1998} showed that
a better rate than $1/n$ cannot hold uniformly for all $n$.
\cite{hick:krit:kuo:nuye:2012} extended this
finding to arithmetic sequences of sample sizes.
They also showed that unequally weighted averages of function
evaluations can attain a better than $1/n$ rate at all values of $n$.
Suppose for instance that the first $n$ sample points are partitioned
into blocks of $n_j$ points, for $j=1,\dots,J$
where the estimate from block $j$ has error $O(n_j^{-\alpha})$.
Then weighting those within block estimates proportionally
to $n_j^a$, with $a\ge \alpha$, attains an error $O( (J/n)^\alpha)$. 
One can commonly arrange $J=O(\log n)$ and then
the error is $o(n^{-\alpha+\epsilon})$ for any $\epsilon>0$.

It remains interesting to consider equal weight rules.
For a complicated problem with weighted points from
multiple spaces, keeping track of the weights becomes
cumbersome. Also, there are quasi-Monte Carlo methods such as
higher order digital nets \citep{dick:2011}, that are
simultaneously rate optimal for more than one class
of functions, each with its own rate.  In such settings
we might want to use the same weights for multiple
integrands, but no single weighing might serve them all best.
Finally, if the constraints we find here on equal weight
rules are unpalatable for some given problem, it provides
motivation to switch to an unequally weighted rule.

An outline of this note is as follows.
Section~\ref{sec:sobols} presents the insight from
the appendix of \cite{sobo:1998} and the extension
by \cite{hick:krit:kuo:nuye:2012}.
If a QMC rule has worst case error $o(1/n)$ holding for all $n$, then
the points $\bsx_i$ must have some very strange
limit properties and the class of functions involved is odd
enough that we could do very well using only one
point $\bsx_n$ for very large $n$.  A generalization
of that argument shows that an $o(1/n)$ rate along
an arithmetic sequence of sample sizes raises similar problems.
Section~\ref{sec:geoworst}
shows that if quadrature error in a class $\cf$ of functions
has a worst case lower bound $mn^{-\alpha}$ for $\alpha>1$
and a specific sequence $\bsx_i$ is rate optimal at sample
sizes $n_1<n_2<\cdots$, then necessarily $n_{k+1}/n_k\ge \rho$
for some constant $1<\rho<2$ depending on $\alpha$ and on
how close the sequence comes to having the optimal constant.
Section~\ref{sec:georms} considers root mean squared error
for sequences of sample points incorporating some randomness.
The same constraints hold in this setting as in the worst
case setting.

\section{Ruling out arithmetic sequences}\label{sec:sobols}
It is not reasonable to expect a QMC rule to have
errors of size $o(1/n)$ for all values of $n\ge N_0$
for some $N_0>0$.   Intuitively, adding a single point
makes a change of order $1/n$ to the estimated integral.
Therefore two consecutive values of the integral estimate
are ordinarily an order of magnitude farther apart from each 
other than they could be by the triangle inequality, 
with the true integral value
making the third corner of the triangle.  This idea is made
precise in the appendix of \cite{sobo:1998}, as we outline here.

Let $\mu =\mu(f)=\int f(\bsx)\rd\bsx$ and
$\hat\mu_n = \hat\mu_n(f)=(1/n)\sum_{i=1}^n f(\bsx_i)$.
The integral is over $[0,1]^d$ and $\bsx_i\in[0,1]^d$
for $i\ge 1$.
Let
$$\eta_n = \eta_n(f) = \frac1n\sum_{i=1}^n f(\bsx_i)-\mu$$
and suppose that  the points $\bsx_i$ are chosen such
$$
\sup_{f\in\cf} |\eta_n(f)| \le \epsilon(n)
$$
where $\epsilon(n) = o(1/n)$, and $\cf$ is a class of integrands.
Sobol' considered $\epsilon(n) = O(n^{-\alpha})$ for some $\alpha>1$.
The classes $\cf$ that we study are usually balls
with respect to a seminorm, such as the standard
deviation in Monte Carlo and the total variation
in the sense of Hardy and Krause, for quasi-Monte Carlo.

Sobol' observed that
\begin{align*}
|f(\bsx_{n+1}) -\mu|& = |(n+1)\eta_{n+1} -n\eta_n| \\
& \le (n+1)\epsilon(n+1) + n\epsilon(n)\\
& \to 0
\end{align*}
as $n\to\infty$.
As a result $\lim_{n\to\infty} f(\bsx_i) = \mu(f)$
for all $f\in\cf$.  The set $\cf$ cannot be very
rich in this case.
As Sobol' noted, for $d=1$, if $\cf$ contains
$x$ it cannot also contain $x^2$.

If we had such a sequence $\bsx_i$ and
a class $\cf$ we might simply estimate $\mu$ by $f(\bsx_n)$
for one extremely large $n$, perhaps the largest
one for which we can compute $\bsx_n$.
Alternatively, when $f\in\cf$ are all known to be 
integrable and anti-symmetric functions on $[0,1]^d$ we can take
$\bsx_1 = (1/2,\dots,1/2)$ and have a zero error. 
This is the most favorable
case for antithetic sampling \citep{hamm:mort:1956}.
MC and QMC methods are ordinarily designed for
more general purpose use, and so such special settings
are of limited importance.

\cite{hick:krit:kuo:nuye:2012} 
extend Sobol's argument to show that we
should not expect $\epsilon( nk )  = o(1/n)$
as $n\to\infty$ for any integer $k\ge1$.
We would then have a class of functions $\cf$ with
$$
\lim_{n\to\infty} \frac1k\sum_{i=1}^kf(\bsx_{nk+i})\to\mu(f)
$$
for all $f\in\cf$. That is a very limited class,
and once again, we could solve the problem uniformly
over that class simply by taking $k$ points $\bsx_{nk+1},\dots
\bsx_{(n+1)k}$ for some very large $n$.

\section{Geometric spacing for the worst case setting}\label{sec:geoworst}

The class $\cf$ of real-valued functions
on $[0,1]^d$ has a superlinear worst case lower bound if
\begin{align}\label{eq:lowerbound}
\sup_{f\in\cf}\,
\biggl|  \frac1n\sum_{i=1}^nf(\bsx_i)-\mu(f)\biggr|
> mn^{-\alpha}
\end{align}
holds for some $\alpha>1$, $m>0$,
all $n\ge1$ and all $\bsx_i\in[0,1]^d$.
There is a uniformly rate optimal sequence
for this class, if
for some $\bsx_i\in[0,1]^d$ and a sequence of sample sizes
$n_1<n_2<\cdots$,
\begin{align}\label{eq:upperbound}
\sup_{f\in\cf}\,
\biggl|  \frac1n\sum_{i=1}^nf(\bsx_i)-\mu(f)\biggr|
\le Mn^{-\alpha},\quad\forall n\in\{n_1,n_2,\dots\}
\end{align}
holds, where $m\le M<\infty$.

The proof of Theorem~\ref{thm:worstcaseextension} below
makes use of some basic facts about fixed-point iterations.
Let $g(x)$ be a continuous function on the 
interval $[a,b]$ taking values in $[a,b]$. 
Then $g$ has at least one fixed point $x_*\in[a,b]$,
with $g(x_*)=x_*$. 
If also, $g$ has Lipschitz constant $\lambda <1$
for all $x\in[a,b]$, then the fixed point $x_*$ is unique. 
Now consider the fixed point iteration $x_{n+1}=g(x_{n})$. 
Under the Lipschitz condition, $x_n$ converges to $x_*$
from any $x_1\in[a,b]$.  These facts
are consequences of \citet[Theorem 4.2.1]{kelley1999iterative}.
When $g$ has derivative $g'$ with $0<g'(x)<1$ on $[a,b]$, then the convergence to $x_*$
is monotone: if $x_1 < x_*$ then $x_{n} < x_{n+1}  < x_*$
for all $n\ge1$, or if $x_1 > x_*$, then $x_n > x_{n+1} > x_*$
for all $n\ge1$
\cite[Remark 2.6]{ackleh2009classical}.

\begin{theorem}\label{thm:worstcaseextension}
Let $\cf$ have a worst case lower bound given
by~\eqref{eq:lowerbound} with
$\alpha>1$ and $0<m\le M<\infty$.
Suppose that there also exists a uniformly rate optimal sequence
$\bsx_i$ satisfying~\eqref{eq:upperbound}. 
If $\rho=\rho_k=n_{k+1}/n_k$, then 
\begin{align}
\label{eq:rhobound}
\rho \ge 1 + \biggl[\frac{m}M(1+\rho^{1-\alpha})^{-1}\biggr]^{1/(\alpha-1)}
> 1 + \Bigl(\frac{m}{2M}\Bigr)^{1/(\alpha-1)}.
\end{align}
\end{theorem}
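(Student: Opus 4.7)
The idea is to generalise Sobol's one-point triangle inequality from Section~\ref{sec:sobols} to a \emph{block} of newly added points: compare the worst-case lower bound applied to the points $\bsx_{n_k+1},\dots,\bsx_{n_{k+1}}$ against the upper bound~\eqref{eq:upperbound} at both endpoints $n_k$ and $n_{k+1}$.

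First I would write, for every $f\in\cf$, the block average
\[
\bar f \;=\; \frac{1}{n_{k+1}-n_k}\sum_{i=n_k+1}^{n_{k+1}} f(\bsx_i)
\;=\; \frac{n_{k+1}\hat\mu_{n_{k+1}}(f)-n_k\hat\mu_{n_k}(f)}{n_{k+1}-n_k}.
\]
Subtracting $\mu(f)$ from both sides, applying the triangle inequality, and plugging in the rate-optimal upper bound~\eqref{eq:upperbound} at both $n_k$ and $n_{k+1}$ gives $|\bar f-\mu(f)|\le M(n_{k+1}^{1-\alpha}+n_k^{1-\alpha})/(n_{k+1}-n_k)$ for every $f\in\cf$. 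The lower bound~\eqref{eq:lowerbound}, applied to the $n_{k+1}-n_k$ block points regarded as an arbitrary configuration in $[0,1]^d$, then yields some $f^*\in\cf$ whose block error strictly exceeds $m(n_{k+1}-n_k)^{-\alpha}$. Applying the previous display to this $f^*$ produces
\[
m\bigl(n_{k+1}-n_k\bigr)^{1-\alpha} \;<\; M\bigl(n_{k+1}^{1-\alpha}+n_k^{1-\alpha}\bigr).
\]

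Dividing by $n_k^{1-\alpha}$ and writing $\rho=n_{k+1}/n_k$ gives $m(\rho-1)^{1-\alpha}<M(1+\rho^{1-\alpha})$. Since $\alpha>1$, the factor $(\rho-1)^{1-\alpha}$ is $(\rho-1)^{-(\alpha-1)}$, so solving for $\rho-1$ yields precisely the implicit inequality in~\eqref{eq:rhobound}. For the explicit bound, I would observe that $n_k<n_{k+1}$ are integers, so $\rho\ge 1+1/n_k>1$ strictly, hence $\rho^{1-\alpha}<1$ and $1+\rho^{1-\alpha}<2$; substituting this into the implicit bound produces $\rho>1+(m/(2M))^{1/(\alpha-1)}$. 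The fixed-point remark in the preamble to the theorem is then the tool that sharpens things further: the map $g(x)=1+[m/(M(1+x^{1-\alpha}))]^{1/(\alpha-1)}$ is increasing (since $1+x^{1-\alpha}$ is decreasing for $\alpha>1$), so the implicit inequality $\rho\ge g(\rho)$ can be iterated monotonically from the explicit starting value to yield a convergent sequence of sharper lower bounds, all below any fixed point of $g$.

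The main obstacle, as I see it, is conceptual rather than technical: one has to spot that Sobol's single-point argument should be re-run on the \emph{block of new samples} of size $n_{k+1}-n_k$, so that the rate $n^{-\alpha}$ lower bound turns into something proportional to $(\rho-1)^{-\alpha}$ while the upper bound only furnishes something of order $\rho^{1-\alpha}$. The algebraic manipulations afterwards, including checking monotonicity of $g$ so that the explicit bound follows from $\rho>1$, are routine.
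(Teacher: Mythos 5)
Your derivation of the key inequality is exactly the paper's: you apply the lower bound~\eqref{eq:lowerbound} to the block of $n_{k+1}-n_k$ fresh points, rewrite the block sum as a telescoping difference of the two averages, bound both via~\eqref{eq:upperbound}, and rearrange in terms of $\rho$. That part matches the paper step for step (the paper's inequality~\eqref{eq:keyineq} is precisely your $m(\rho-1)^{1-\alpha}\le M(1+\rho^{1-\alpha})$ after dividing by $n_k^{1-\alpha}$).

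Where you diverge, and in fact simplify, is in the second inequality of~\eqref{eq:rhobound}. You get it directly from $\rho>1 \Rightarrow \rho^{1-\alpha}<1 \Rightarrow (1+\rho^{1-\alpha})^{-1}>1/2$, so the middle expression strictly exceeds $1+(m/(2M))^{1/(\alpha-1)}$. The paper instead invokes the whole fixed-point apparatus it set up before the theorem: showing $g$ maps $[1,2]$ into $[1,2]$, has derivative bounded by a Lipschitz constant $\lambda<1$, has a unique fixed point $\rho_*\in(1,2)$, and then concluding $\rho\ge\rho_*>g(1)$. That machinery is needed to justify the remark after the theorem that there is a well-defined smallest $\rho$ satisfying the implicit bound (which is what the figure computes), but it is overkill for the chain of inequalities actually stated in the theorem, and your monotonicity observation gives the same conclusion in one line. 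You also correctly flag that iterating $g$ from $g(1)$ produces sharper lower bounds converging to $\rho_*$, which is the substance of the paper's fixed-point discussion. The one small stylistic note: the theorem states the first inequality as $\ge$; since you track the strict inequality from~\eqref{eq:lowerbound} you actually obtain $>$ there, which is fine and slightly stronger.
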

\begin{proof}
From the lower bound~\eqref{eq:lowerbound}, there is an $f\in\cf$ with
\begin{align}
m(n_{k+1}-n_k)^{-\alpha}
& \le
\biggl|
\frac1{n_{k+1}-n_k}\sum_{i=n_k+1}^{n_{k+1}} (f(\bsx_i)-\mu)
\biggr|\notag\\
& =
(n_{k+1}-n_k)^{-1}
\biggl|
\sum_{i=1}^{n_{k+1}} (f(\bsx_i)-\mu)-
\sum_{i=1}^{n_{k}} (f(\bsx_i)-\mu)
\biggr|\notag\\
& \le M(n_{k+1}-n_k)^{-1}(n_{k+1}^{1-\alpha} +n_k^{1-\alpha} ),\label{eq:keyineq}
\end{align}
where we have applied upper bounds from~\eqref{eq:upperbound}.
Writing $\rho = n_{k+1}/n_k$ and rearranging~\eqref{eq:keyineq},
yields the first inequality in~\eqref{eq:rhobound}.

Next we define $g(\rho) = 1+( (m/M)(1+\rho^{1-\alpha})^{-1})^{1/(\alpha-1)}$,
the middle quantity in~\eqref{eq:rhobound}.
It has derivative
$$
g'(\rho) = \Bigl(\frac{m}M\Bigr)^{1/(\alpha-1)}\bigl( 1+\rho^{1-\alpha}\bigr)^{-1/(\alpha-1)-1} \rho^{-\alpha}
$$
which is positive for $\rho\in[1,2]$. Therefore $1<g(1) \le g(\rho)\le g(2)<2$
and so $g$ maps $[1,2]$ into $[1,2]$. 
Because $g'\le\lambda\equiv (m/M)^{1/(\alpha-1)}2^{-1/(\alpha-1)-1}$
the Lipschitz constant for $g$ is at most $\lambda<1$.
Thus $g$ has a unique fixed point $\rho_*\in(1,2)$ with $g(\rho_*)=\rho_*$.
If $\rho$ satisfies the first inequality in~\eqref{eq:rhobound},
then $\rho\ge\rho_*$.  By the monotone convergence of
this iteration, $\rho_*>g(1)=1+(m/(2M))^{1/(\alpha-1)}$.
\end{proof}

A rate optimal sequence $n_k$ must be spread out at
least geometrically, with a ratio $n_{k+1}/n_k\ge \rho$
where $\rho$ is now the smallest number satisfying the
first inequality in~\eqref{eq:rhobound}.
This $\rho$ depends on $m/M$ and on $\alpha$.
By definition $\rho >1$. Also we can find by
inspection that $\rho=2$ satisfies the
first inequality in~\eqref{eq:rhobound} and so
Theorem~\ref{thm:worstcaseextension} never
rules out a doubling of the sample size.
The lower bound on the critical extension factor
$n_{k+1}/n_k$ always satisfies $1<\rho<2$.

Figure~\ref{fig:extensionbound}
shows this extension bound as a function of 
$\alpha$ for various levels of the ratio $m/M$.
The values there were computed
via Brent's algorithm~\citep{bren:1973}.
The case $m/M =1$ is of special interest.  It describes
a rate-optimal rule that also attains the
optimal constant. That case has the highest bound
on the extension factor.

\begin{figure}
\centering
\includegraphics[width=\hsize]{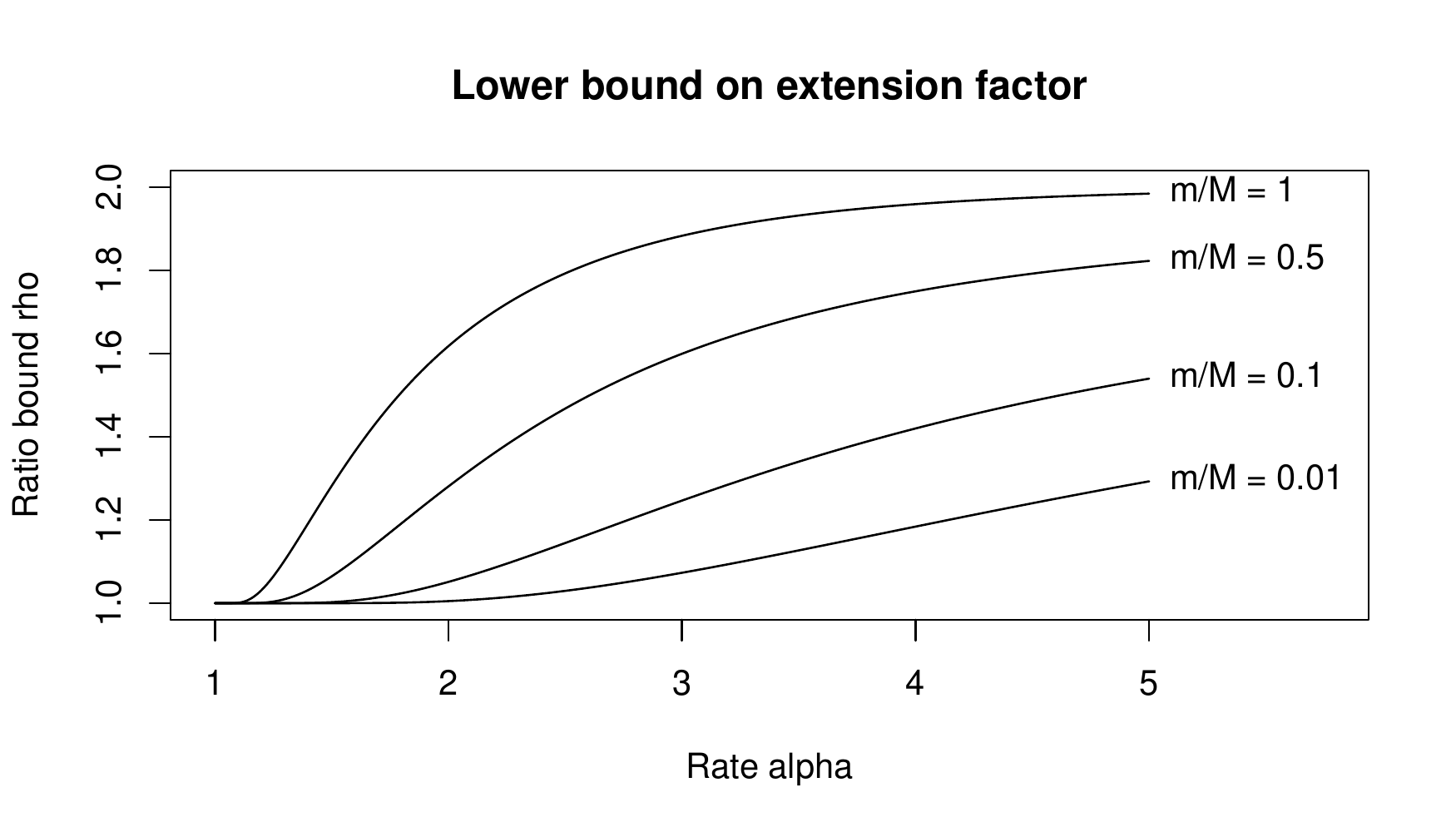}
\caption{\label{fig:extensionbound}
For worst case rates $n^{-\alpha}$ this figure shows
a lower bound on the extension factor $n_{k+1}/n_k$
in a uniformly rate-optimal integration sequence.
}
\end{figure}

If the rate $n^{-\alpha}$
is generalized to $n^{-\alpha}\log(n)^\beta$
for $\alpha>1$ and $\beta>0$, then a geometric
spacing is still necessary.
For any $1<\gamma<\alpha$ and large enough $n_k$
it is necessary to have
$\rho_k = n_{k+1}/n_k \ge 1+(m/2M)^{1/(1-\gamma)}$.

\section{The root mean square error setting}\label{sec:georms}

The class $\cf$ of real-valued functions
on $[0,1]^d$ has a superlinear root mean square lower bound if
\begin{align}\label{eq:lowerboundrms}
\sup_{f\in\cf}\,
\e\biggl(
\biggl|  \frac1n\sum_{i=1}^nf(\bsx_i)-\mu(f)\biggr|^2\biggr)^{1/2}
> mn^{-\alpha}
\end{align}
holds for some $\alpha>1$, $m>0$,
all $n\ge1$ and any random $\bsx_i\in[0,1]^d$.
Here $\e(\cdot)$ denotes expectation with respect to
the randomness in $\bsx_i$.
The sequence of random $\bsx_i\in[0,1]^d$ is uniformly
rate-optimal for this class if there is
a sequence of sample sizes $n_1<n_2<\cdots$, for which
\begin{align}\label{eq:upperboundrms}
\sup_{f\in\cf}\,
\e\biggl(\biggl|  \frac1n\sum_{i=1}^nf(\bsx_i)-\mu(f)\biggr|\biggr)^{1/2}
\le Mn^{-\alpha},\quad\forall n\in\{n_1,n_2,\dots\}
\end{align}
holds, where $m\le M<\infty$.

The same theorem holds for the root mean square
error case as holds for the worst case. 
It is not necessary to assume that any of $f(\bsx_i)$
are unbiased or to make any assumption about the
correlation structure among the $f(\bsx_i)$.
We only need to square one of the identities
in Theorem~\ref{thm:worstcaseextension} and then
use standard moment inequalities from probability theory.

\begin{theorem}\label{thm:rmscaseextension}
Let $\cf$ have a root mean square lower bound given
by~\eqref{eq:lowerboundrms} with
$\alpha>1$ and $0<m\le M<\infty$.
Suppose that there also exists a uniformly rate optimal sequence
of random $\bsx_i$ satisfying~\eqref{eq:upperboundrms}. 
If $\rho=\rho_k=n_{k+1}/n_k$, then 
\begin{align}
\label{eq:rhoboundrms}
\rho \ge 1 + \biggl[\frac{m}M(1+\rho^{1-\alpha})^{-1}\biggr]^{1/(\alpha-1)}
\ge 1 + \Bigl(\frac{m}{2M}\Bigr)^{1/(\alpha-1)}.
\end{align}
\end{theorem}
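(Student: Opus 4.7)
The plan is to mirror the proof of Theorem~\ref{thm:worstcaseextension} step for step, with the absolute value $|\cdot|$ replaced by the $L^{2}$ norm $\|Z\|_{2}=\e(|Z|^{2})^{1/2}$ (reading the upper bound~\eqref{eq:upperboundrms} as a root-mean-square bound), and with Minkowski's inequality playing the role of the ordinary triangle inequality. The lower bound~\eqref{eq:lowerboundrms} is stated for \emph{any} random points in $[0,1]^{d}$, so it applies in particular to the length-$(n_{k+1}-n_{k})$ block $\bsx_{n_{k}+1},\dots,\bsx_{n_{k+1}}$.

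Concretely, I would first invoke~\eqref{eq:lowerboundrms} on that block to obtain some $f\in\cf$ with
\begin{align*}
m(n_{k+1}-n_{k})^{-\alpha}
<\biggl\|\frac{1}{n_{k+1}-n_{k}}\sum_{i=n_{k}+1}^{n_{k+1}}\bigl(f(\bsx_{i})-\mu(f)\bigr)\biggr\|_{2}.
\end{align*}
Next I would rewrite the block sum as $\sum_{i=1}^{n_{k+1}}(f(\bsx_{i})-\mu)-\sum_{i=1}^{n_{k}}(f(\bsx_{i})-\mu)$ and apply Minkowski to bound its $L^{2}$ norm by the sum of the two $L^{2}$ norms. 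The upper bound~\eqref{eq:upperboundrms} applied to these two partial sums (for this same $f$) gives $M n_{k+1}^{1-\alpha}+M n_{k}^{1-\alpha}$. Dividing through by $n_{k+1}-n_{k}$, multiplying by $n_{k}^{\alpha-1}$, and writing $\rho=n_{k+1}/n_{k}$ yields $(\rho-1)^{\alpha-1}>(m/M)(1+\rho^{1-\alpha})^{-1}$, which is the first inequality in~\eqref{eq:rhoboundrms}. Crucially, nothing here requires the $f(\bsx_{i})$ to be unbiased or to have any particular dependence structure; Minkowski absorbs all of that.

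The second inequality in~\eqref{eq:rhoboundrms} is inherited \emph{verbatim} from the proof of Theorem~\ref{thm:worstcaseextension}: the same map $g(\rho)=1+((m/M)(1+\rho^{1-\alpha})^{-1})^{1/(\alpha-1)}$ is a contraction of $[1,2]$, so its monotone iteration converges to a unique fixed point $\rho_{\ast}>g(1)=1+(m/(2M))^{1/(\alpha-1)}$, and any $\rho$ satisfying the first inequality must lie above $\rho_{\ast}$. Since this half of the argument depends only on the algebraic shape of the first inequality, which is identical to~\eqref{eq:rhobound}, it transfers with no modification. The only real obstacle is the careful bookkeeping in the transition from averaged to summed quantities in $L^{2}$; once Minkowski is in place, everything else is formally the same as in the worst-case proof.
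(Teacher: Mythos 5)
Your proposal is correct and is essentially the paper's proof in a slightly more streamlined dress. The paper writes out the expected square of the right side of the identity $\frac1{\Delta_k}\sum_{i=n_k+1}^{n_{k+1}}(f(\bsx_i)-\mu)=\frac1{\Delta_k}(n_{k+1}\eta_{n_{k+1}}-n_k\eta_{n_k})$, bounds the cross term $-2n_{k+1}n_k\e(\eta_{n_{k+1}}\eta_{n_k})$ by $+2n_{k+1}n_k\sqrt{\e(\eta_{n_{k+1}}^2)\e(\eta_{n_k}^2)}$ via Cauchy--Schwarz, and then recognizes the perfect square; invoking Minkowski for the $L^2$ norm, as you do, packages exactly that chain of estimates in one step, so the two routes are mathematically identical. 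Your observation that no unbiasedness or independence is needed is the same remark the paper makes, and the fixed-point argument for the second inequality indeed transfers verbatim since the first inequality has the same algebraic form as~\eqref{eq:rhobound}.
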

\begin{proof}
To shorten some expressions, let $\Delta_k = n_{k+1}-n_k$
and recall that $\eta_n = (1/n)\sum_{i=1}^n(f(\bsx_i)-\mu(f))$.
We begin with the identity,
\begin{align}\label{eq:basicident}
\frac1{\Delta_k}
\sum_{i=n_k+1}^{n_{k+1}} (f(\bsx_i)-\mu)
=
\frac1{\Delta_k}
\bigl( n_{k+1}\eta_{n_{k+1}} - n_k\eta_{n_k}\bigr).
\end{align}
The expected square of the left side of~\eqref{eq:basicident}
is no smaller than $m^2/\Delta_k^{2\alpha}$.
The expected square of the right side of~\eqref{eq:basicident} is
\begin{align*}
&\phantom{\le}\,
\frac1{\Delta_k^2}\Bigl[
n_{k+1}^2\e( \eta_{n_{k+1}}^2)
-2n_{k+1}n_k\e(\eta_{n_{k+1}}\eta_{n_k})
+n_{k}^2\e( \eta_{n_{k}}^2)\Bigr]\\
&\le\frac1{\Delta_k^2}\Bigl[
n_{k+1}^2\e( \eta_{n_{k+1}}^2)
+2n_{k+1}n_k\sqrt{\e(\eta_{n_{k+1}}^2)\e(\eta_{n_k}^2)}
+n_{k}^2\e( \eta_{n_{k}}^2)\Bigr]\\
&\le
\frac{M^2}{\Delta_k^2}\Bigl[
n_{k+1}^{2(1-\alpha)}
+2n_{k+1}^{1-\alpha}n_k^{1-\alpha}
+n_{k}^{2(1-\alpha)}\Bigr].
\end{align*}
As a result,
\begin{align}\label{eq:squaredident}
\frac{m^2}{\Delta_k^{2\alpha}}\le \frac{M^2}{\Delta_k^2}\Bigl[
n_{k+1}^{1-\alpha}+n_{k}^{1-\alpha}\Bigr]^2.
\end{align}
Taking the square root of both sides of~\eqref{eq:squaredident}
and rearranging, yields~\eqref{eq:keyineq}, from which the
theorem follows just as it did for the worst case analysis.
\end{proof}

\section{Discussion}\label{sec:discussion}

We have found a constraint on the spacings of
a rate-optimal equal-weight extensible MC or
QMC sequence.  The constraint only applies
when the convergence is better than $O(1/n)$.

We can use that constraint in reverse as follows.
Suppose that a rate-optimal sequence $n_k$
includes sample sizes with $n_{k+1}/n_k=\rho\in(1,2)$
and attains the error rate $O(n^{-\alpha})$ for $\alpha>1$.
Then from~\eqref{eq:rhobound} we obtain
$$
\frac{M}m\ge \frac{ (1+\rho^{1-\alpha})^{-1/(\alpha-1)}}{\rho-1}.
$$
As we approach an arithmetic progression
by letting $\rho\downarrow 1$, the inefficiency in the constant
factor increases without bound.

The constraint only applies to rate-optimal sequences.
In particular it does not apply to an extensible sequence that
may be inefficient by a logarithmic factor.

\section*{Acknowledgments}

I thank Alex Kreinin and Sergei Kucherenko
for pointing out to me the
elegant argument in the Appendix of \cite{sobo:1998},
and Erich Novak for sharing his slides from
MCQMC 2014.
This work was supported by grants DMS-0906056 
and DMS-1407397 of the U.S. National Science Foundation.

\bibliographystyle{apalike}
\bibliography{qmc}
\end{document}